\documentclass[11pt]{amsart}

\usepackage[left=1in, right=1in, top=1.2in, bottom=1.2in]{geometry}
\usepackage{microtype, mathtools, mathrsfs, enumerate, dsfont, esvect}
\usepackage{tikz,wasysym}
\usepackage{pgf}
\usepackage{amssymb}
\usetikzlibrary{positioning,automata}
\usetikzlibrary{arrows,automata,positioning}
\usepackage[font=small,labelfont=bf]{caption}
\usepackage{bbm}
\usepackage{verbatim}
\usepackage[linktocpage]{hyperref}
\hypersetup{
    colorlinks=true,        % false: boxed links; true: colored links
    linkcolor=red,          % color of internal links (change box color with linkbordercolor)
    citecolor=blue,         % color of links to bibliography
    filecolor=magenta,      % color of file links
    urlcolor=cyan           % color of external links
}

\newtheorem{thm}{Theorem}

\newtheorem{prop}[thm]{Proposition}

\newtheorem{lem}[thm]{Lemma}
\newtheorem{defn}[thm]{Definition}
\newtheorem{cor}[thm]{Corollary}
\newtheorem*{thm*}{Theorem}
\newtheorem*{prop*}{Proposition}
\newtheorem{rmk}[thm]{Remark}
\numberwithin{thm}{section}

\newcommand{\calR}{\mathcal{R}}

\newcommand{\calL}{\mathcal{L}}
\newcommand{\calP}{\mathcal{P}}
\newcommand{\calF}{\mathcal{F}}
\newcommand{\calM}{\mathcal{M}}

\newcommand{\N}{\mathbb{N}}

\newcommand{\Q}{\mathbb{Q}}

\newcommand{\I}{\mathbb{I}}

\newcommand{\Fin}{\mathcal F}
\newcommand{\W}{\mathcal W}
\newcommand{\Int}{\operatorname{Int}}

\title{O-minimal open core is not an elementary property}
\author{Alexi Block Gorman}
\address{Universiteit van Amsterdam\\
Institute for Logic, Language and Computation\\
Science Park 900\\ 
1098 XH Amsterdam\\
Netherlands} \email{a.t.blockgorman@uva.nl}
\author{Esther Elbaz Saban}

\begin{document}

\begin{abstract}
Given a structure $\calM$ with a definable topology, its open core is a structure defined on the same universe whose language consists of all open sets of all arities definable in $\calM$.
In response to questions raised by Dolich, Miller, and Steinhorn in their early work on open core, we prove that having an o-minimal open core is not an elementary property.
In particular, we construct an expansion of the structure $(\Q,<)$ that has an o-minimal open core, but some of its elementary superstructures do not.
\end{abstract}

\maketitle

\section{Introduction}

The notion of an open core has become a mainstay of the model-theoretic study of ordered structures. 
Introduced by Miller and Speissegger in \cite{MS99}, the open core of a structure is meant to capture a geometric notion of tameness that allows for some degree of ``noise'' or complexity, so long as such pathologies do not interact too much with the structure's natural topology.
It was established by Hieronymi, Nell and Walsberg in \cite{HNW17} that structures can be maximally complex in terms of neostability while still having tame (even o-minimal) open cores.

However, there are still many rather basic questions, in particular about structures with o-minimal open cores, that have remained unanswered. 
In this paper, we address one of the natural questions raised by the work of Dolich, Miller, and Steinhorn in \cite{DMS1, DMS2}.
Namely, we show that the property of having an o-minimal open core is not elementary.

Formally, the open core of a first order structure is defined as follows.
Given a structure $\mathcal{M}$ with a definable topology $\tau$, we let $\mathcal{M}^{\circ}$ denote the structure $(M,(U))$, where $U$ ranges over the open sets (with respect to $\tau$) of all arities definable (with parameters) in $\mathcal{M}$, and call this structure \textbf{the open core of $\mathcal{M}$}. 
We say that a structure $\calR:=(R,<,\ldots)$ has \textbf{o-minimal open core} if $\calR$ extends the theory DLO (dense linear orders without endpoints), and $\calR^{\circ}$ is o-minimal (i.e.\ every unary set definable with parameters is a finite union of points and intervals).

At this point, we should caution the reader against disregarding the ``with parameters'' caveat in the above definitions.
It is straightforward to cook up an example of a structure whose $\emptyset$-definable unary sets consist only of points and intervals, but whose definable sets with parameters do not share this property.
Hence, for the notion of ``o-minimal open core'' to share even a fraction of the richness of o-minimality, we must at all times understand ``definable'' to mean definable with parameters.

We sometimes state that some structure ``is'' the open core of another structure, despite its language \emph{not} consisting precisely of every open set in every arity that is definable in the original structure.
Given two structures $\mathcal{S}_1$ and $\mathcal{S}_2$ with the same universe $S$, we say $\mathcal{S}_1$ and $\mathcal{S}_2$ are \textbf{interdefinable} if $\mathcal{S}_1$ and $\mathcal{S}_2$ define the same sets. 
For a theory $T$ extending the theory of dense linear orders, we say that a theory $T'$ is an \textbf{open core of $T$} if for every $\mathcal{M} \models T$ there exists $\mathcal{M}' \models T'$ such that $\mathcal{M}^{\circ}$ and $ \mathcal{M}'$ are interdefinable.

There has been a recent trend (see, e.g., \cite{BHK20}, \cite{Z26}) of trying to identify 
the open core of a (complete) theory in the above sense, rather than just structures.
However, the primary result of this paper illustrates the limitations on such results.
Our main result is the following.

\begin{thm}\label{Main}
The property of having o-minimal open core is not elementary.
\end{thm}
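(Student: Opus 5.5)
The plan is to exhibit a single structure $\calR=(\Q,<,P)$, an expansion of $(\Q,<)$ by a unary predicate $P$ (or possibly a binary relation), such that $\calR$ has o-minimal open core while some elementary extension $\calR^*\succ\calR$ does not. This suffices: if having an o-minimal open core were elementary, it would be preserved under elementary equivalence, and $\calR$ and $\calR^*$ would both have it or both lack it.

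The intended mechanism is \emph{discreteness that is hidden in the standard model but visible after saturation}. Take $P\subseteq\Q$ to be dense and codense, so that it is not itself open and does not obviously damage the open core, but arrange that $P$ codes a discrete sequence only at ``infinitesimal scales.'' To do this I would fix an enumeration of $\Q$ and choose $P$ so that, for each $n$, some formula $\varphi_n(x,\bar y)$ built from $<$ and $P$ holds in $\Q$ of sets with at most $n$ components, while the uniform bound fails. An example would be ``$x$ begins a maximal $P$-block of length $k$ around $y$'' in some auxiliary discrete coding. By compactness, in a suitably saturated $\calR^*$ the family $\varphi(x,\bar b)$ then defines an \emph{open} set with infinitely many convex components, for instance a union of intervals accumulating to a point. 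That set belongs to $(\calR^*)^\circ$ and contradicts o-minimality there. Concretely, I would make the definable open sets in $\Q$ finite unions of intervals with uniformly unbounded numbers of components, and rely on the failure of uniform finiteness in the elementary extension.

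So there are three steps. First, build $P$ (or a binary relation $E$) on $\Q$ with a definable family of open sets $U_{\bar b}$ whose component counts are finite for each standard $\bar b$ but unbounded. Second, prove that in $\calR$ every definable open set, of every arity, is definable in an o-minimal structure; ideally it is definable in $(\Q,<)$ itself, given the standard finiteness. Third, apply compactness to obtain $\calR^*\succ\calR$ and $\bar b^*$ with $U_{\bar b^*}$ having infinitely many components, so that $(\calR^*)^\circ$ is not o-minimal.

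I expect the second step to be the main obstacle: controlling \emph{all} definable open sets in $\calR$, in every arity and with parameters. This requires a quantifier-elimination or back-and-forth analysis of $\mathrm{Th}(\calR)$. The obstruction is subtle, because uniform finiteness is known to fail, so the argument must genuinely use the standardness of $\Q$; for instance, that every element is definable from finitely many points in a controlled way, or that in $\Q$ each parameter sits at ``finite depth.'' I would first try a back-and-forth system between finite tuples that preserves the combinatorial type. From it I would derive that any definable open set in $\Q$ is a Boolean combination of sets defined by order conditions together with finitely many of the $U_{\bar b}$, each of which is a finite union of intervals with rational endpoints. This yields o-minimality of $\calR^\circ$ via the characterization of o-minimality by definable unary sets, extended to higher arities by checking that projections of open sets remain tame.
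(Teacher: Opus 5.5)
Your architecture is the same as the paper's. You want a definable family of open sets with finitely but unboundedly many components; the paper's $\Q\setminus A_x$, $x<0$, is exactly such a family. You then want a proof that in the standard model every definable open set of every arity is already $(\Q,<)$-definable, followed by compactness in an $\aleph_1$-saturated extension. Your third step is fine. But the first two steps carry the whole theorem, they are not actually carried out, and the one concrete choice you make provably fails.

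If $P\subseteq\Q$ is dense and codense, Cantor's back-and-forth shows that $(\Q,<,P)$ is the countable generic two-coloured dense order. Its theory is $\aleph_0$-categorical, so $(\Q,<,P)$ is $\aleph_0$-saturated. Then every definable family $\varphi(x;\bar y,\bar c)$ with finite fibres is uniformly bounded; otherwise the type $\{\exists^{\ge n}x\,\varphi(x;\bar y,\bar c):n\in\N\}$ over $\bar c$ would be realized. No further ``arrangement'' of $P$ can change this, since the isomorphism type is already determined.

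The same obstruction holds for any unary expansion with o-minimal open core. $\Int(P)$ and $\Int(\Q\setminus P)$ are finite unions of intervals. After naming their finitely many endpoints, the structure is a finite ordered sum of pieces on which $P$ is empty, full, or dense-codense. Hence it is again $\aleph_0$-categorical and uniformly finite. So a genuinely non-unary relation is needed, and you do not supply one.

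The missing idea is how to make the non-uniform data invisible to the topology while keeping it definable. The paper uses $A(x,y)\iff x<0<y\wedge y\in\rho(x)$, where $\rho\colon\Q_{<0}\to\Fin$ has every fibre $\rho^{-1}(F)$ dense in $\Q_{<0}$.
\begin{itemize}
\item Density lets one replace $\exists t<0$ over a free order cell by $\exists F\in\Fin$. This gives a normal form: order cells in the negative coordinates conjoined with $\W(\Q_{>0})$-formulas in $\rho(\bar x),\bar y$.
\item Every compatible tuple of labels occurs in every box meeting a cell. So ``$B_{<0}\times B_{>0}\subseteq X$'' becomes, cell by cell, a pure-order condition on the negative endpoints together with a $\W(\Q_{>0})$-formula in the positive endpoints.
\item Stable embeddedness of $(\Q_{>0},<)$ in its weak monadic second-order expansion, a consequence of $\aleph_0$-categoricity of DLO, makes the latter pure-order definable.
\item Hence interiors, and so all definable open sets of all arities, are $(\Q,<)$-definable, and $\calR^\circ$ is o-minimal.
\end{itemize}

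A back-and-forth ``preserving the combinatorial type'' has nothing to act on until such a coding is fixed. Also, describing definable open subsets of $\Q$ as Boolean combinations of order conditions and finitely many $U_{\bar b}$ would not by itself control projections of higher-arity open sets. You need the uniform statement for all arities, which is what the box-containment argument provides.
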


We provide an explicit example to demonstrate this claim.
As one might expect, this is most easily achieved by avoiding any sort of algebraic structure whatsoever.
Our example expands the rational numbers $\Q$ by a binary relation chosen to be as conservative an expansion as possible while still providing a counterexample.
Such examples surely exist with additional algebraic structure, one would likely just need to define the special relation that violates uniform finiteness in a such a way that the fibers avoid all algebraic subsets.

It is natural to wonder why no such example has appeared ``in the wild'' before now, and why we should need to construct such an artificial example.
The answer lies in the prevalence of uniform finiteness in structures whose open cores have been considered.
We say that a structure is \textbf{uniformly finite} if for every definable set $D$ of arity $n$, if the one-dimensional fibers of $D$ are all finite, then there is some $N \in \N$ that bounds the size of all of $D$'s one-dimensional fibers.
Uniform finiteness plays an essential role in many important theorems within the realm of tame geometry, such as the o-minimal cell decomposition theorem.

Uniform finiteness is also used crucially in the work of Boxall and Hieronymi in \cite{BH12} on properties of open cores.
Our main result was inspired by the work of Boxall and Hieronymi in \cite{BH12}, though we do not use their results directly.
For researchers in tame geometry (in particular, the study of topological tameness beyond o-minimality), it has been a solid rule of thumb that for a structure to have any hope of being ``tame,'' it must be uniformly finite.
This paper serves in large part to justify this intuition.

The example we construct can be interpreted in the \textbf{weak monadic second order theory} of a dense linear order with only a left endpoint.
We write $\W(\I)$ to denote this theory, where $\I$ is the dense linear order in question.
Monadic second order theories have long been a prominent object of study in their own right, and have been a valuable source of tools and examples in various model theory- and computability theory-related contexts.
Throughout this paper, we will roughly follow the terminology and notation of Linkhorn in \cite{L21}, to which we refer the reader for more context and exposition about the monadic second order theory of DLO as well.

\section{Preliminaries}
\subsection{The weak monadic second order theory of DLO}\label{WSO}

We first define the weak monadic second order theory of a dense linear order with a left endpoint, but no right endpoint.
In \cite{L21}, the author proves that the weak monadic second order theory of this structure is model complete in the following language:
$$\calL_{W(\I )} =\{\cup ,\cap ,\emptyset ,0,\min,\max,s^{-1}\}.$$

We write $W(\I )$ to denote the structure in this language such that:
\begin{itemize}
\item The universe $\calP_{fin}(\I)$ is the collection of finite subsets of $\I$;
\item $\cup, \cap$ are interpreted as the usual union and intersection;
\item $\emptyset$ is interpreted as the empty set;
\item $0$ is interpreted as the least element of $\I$;
\item $\min$ and $\max$ are interpreted as the operations taking a non-empty finite set to the singleton containing its minimum and maximum respectively, with respect to the ordering of $\I$ (and both fixing $\emptyset$);
\item $s^{-1}$ is the binary function given by $s^{-1}(A,B) = \{i \in A : s_A(i) \in B\}$,
i.e. $s^{-1}$ is a single binary function which encodes the (preimage map associated to) the family of successor functions on finite subsets of $I$.
\end{itemize}

Furthermore, let $i: \I \to \calP_{fin}(\I)$ be the map $x \mapsto \{x\}$ identifying elements of $\I$ with the corresponding singleton in $\calP_{fin}(\I)$.
It is easy to see that $(\I,<)$ can be interpreted in $W(\I)$, and moreover that the induced structure on the interpretation of $(\I,<)$ on the set of singletons is just that of $(\I,<)$.

Throughout this paper, we will work with the analogous weak monadic second order theory $W(\Q_{>0})$ in which the dense linear order has no endpoints.
The presence of a left endpoint, or in our case lack thereof, will make no material difference on account of the specific results we will apply from this context.

\subsection{Construction of $(\Q,<,0,A)$}

Our setting is the dense linear order of rational numbers $(\Q,<)$.
Our aim is to construct a set $A\subseteq \Q^2$ such that:
\begin{itemize}
    \item[(P1)] For every $x\in \Q$, the fiber $A_x:=\{y: (x,y) \in A\}$ is finite;
    \item[(P2)] For every $N\in \N$, there exists $x\in \Q$ such that $A_x$ has cardinality greater than $N$;
    \item[(P3)] $(\Q, <, A)$ has an o-minimal open core.
\end{itemize}

Let $\Fin:=[\mathbb{Q}_{>0}]^{<\omega}$
denote the set of finite subsets of~\(\mathbb{Q}_{>0}\).

\begin{defn}[Dense coding]\label{lem:dense-coding}
Fix a map
$\rho:\mathbb{Q}_{<0}\to \Fin $
such that, for every \(F\in\Fin\), the set
$D_F:=\rho^{-1}(F)$
is dense and codense in \(\mathbb{Q}_{<0}\).
\end{defn}

We define $\calM=(\mathbb{Q},<,0,A)$ via
\[ A(x,y)
\quad\Longleftrightarrow\quad
x<0<y\text{ and }y\in\rho(x). \]
Equivalently, for \(x<0\) we define the fiber of $x$ in $A$:
\[
A_x:=\{y\in\mathbb{Q}:M\models A(x,y) \} =\rho(x),
\]
and for \(x\geq 0\), we set
$A_x=\emptyset$.

In $(\mathbb{Q},<,0,A)$, every fiber \(A_x\) is finite, and the sizes of the fibers \(A_x\) are not uniformly bounded.
It is clear that $A$ satisfies (P1) and (P2).

We will express the weak monadic structure associated with 
\((\mathbb{Q}_{>0},<)\) as follows:
\[ \W(\mathbb{Q}_{>0}):=(\mathbb{Q}_{>0},\Fin,<,\in), \]
where \(\in\subseteq \mathbb{Q}_{>0}\times\Fin\) is membership.
We write $\calL_{\W(\Q_{>0})} = \{\Fin,<,\in \}$ to denote the language of this structure.

\section{Interpreting $(\Q, <,0, A)$ in $\W(\Q_{\geq 0})$}

To interpret our structure $(\Q, <,0, A)$ in $\W(\Q_{\geq 0})$ in such a way as to be useful, we must first establish some terminology and definitions, mostly following the standard conventions from the literature on o-minimality.

\begin{defn}
Let \(C\subseteq \mathbb{Q}_{<0}\) be finite, and let
$\bar x=(x_1,\dots,x_r)$
be variables ranging over \(\mathbb{Q}_{<0}\).

A \emph{complete order cell over \(C\)} in the variables \(\bar x\) is a maximal consistent conjunction deciding all order relations among the variables \(x_i\) and the parameters from \(C\). More explicitly, it is a conjunction which, for every \(i,j \leq r\), contains exactly one of the following:
\[
x_i<x_j,\qquad x_i=x_j,\qquad x_j<x_i,
\]
and, for every \(i\leq r\) and every \(c\in C\), contains exactly one of the following:
\[
x_i<c,\qquad x_i=c,\qquad c<x_i.
\]

We identify a complete order cell with the subset of \(\bigl(\mathbb{Q}_{<0}\bigr)^r\) that it defines. The complete order cells over \(C\) form a finite partition of \(\bigl(\mathbb{Q}_{<0}\bigr)^r\).
\end{defn}

Let
$ \varphi(\bar x,\bar y;\bar a)$
be an \(\calL_{\W(\mathbb{Q}_{>0})}\)-formula whose free variables range as follows:
\[
\bar x=(x_1,\dots,x_r)\in \bigl(\mathbb{Q}_{<0}\bigr)^r,
\qquad
\bar y=(y_1,\dots,y_s)\in \bigl(\mathbb{Q}_{>0}\bigr)^s.
\]
We may partition the parameters \(\bar a\) into $\bar{c} \cup \bar{d}$ consisting of negative parameters
$\bar c=(c_1,\dots,c_m)\in \bigl(\mathbb{Q}_{<0}\bigr)^m,$
positive parameters
$\bar d\in \bigl(\mathbb{Q}_{>0}\bigr)^\ell$ (and we omit the parameter $0$, as it is a constant in our language).

\begin{prop}
\label{prop:relative-reduction}
On \(\bigl(\mathbb{Q}_{<0}\bigr)^r\times \bigl(\mathbb{Q}_{>0}\bigr)^s\), the formula \(\varphi(\bar x,\bar y;\bar a)\) is equivalent to a finite disjunction of formulas of the form
\[
\chi_i(\bar x;\bar c)
\wedge
\Theta_i\bigl(
\rho(x_1),\dots,\rho(x_r),\bar y;
\rho(c_1),\dots,\rho(c_m),\bar d
\bigr),
\]
where:
\begin{itemize}
    \item \(\chi_i(\bar x;\bar c)\) is a $\{<\}$-formula defining a subset of \(\mathbb{Q}_{<0}^r\) with parameters \(\bar c\);
    \item \(\Theta_i\) is a $\calL_{W(\Q_{>0})}$-formula 
    %where $x_1, \ldots ,x_r$ range over $\Q_{<0}$, 
    where $\rho(x_1),\dots,\rho(x_r)\in \calF$ and \(\bar y\) ranges over $\Q_{>0}$.
\end{itemize}
\end{prop}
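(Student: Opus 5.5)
The plan is to prove a slightly stronger statement by induction on formulas. I read $\varphi$ as a formula of $(\Q,<,0,A)$, possibly also quantifying over the second sort $\Fin$, evaluated in the two-sorted structure
\[
\mathcal N:=\bigl((\Q_{<0},<),\ \W(\Q_{>0}),\ \rho\bigr),
\]
in which the negative half carries only its order and is linked to the weak monadic structure on the positive half by the map $\rho$. Any atomic formula of $(\Q,<,0,A)$ on $(\Q_{<0})^r\times(\Q_{>0})^s$ already has the required shape. Comparisons between a negative and a positive element, or with $0$, are constant truth values. Order among negatives is a $\{<\}$-formula $\chi$. Order among positives is an $\calL_{\W(\Q_{>0})}$-formula. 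Finally, $A(x_i,y_j)$ becomes $y_j\in\rho(x_i)$, and $A(c_k,y_j)$ becomes $y_j\in\rho(c_k)$; instances of $A$ with a positive first coordinate or a negative second coordinate are false.

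A quantifier $\exists z$ over all of $\Q$ splits as $(\exists z<0)\vee\varphi(0)\vee(\exists z>0)$. It therefore suffices to show that the class of finite disjunctions $\bigvee_i \chi_i\wedge\Theta_i$ is closed under Boolean operations, under $\exists$ over a positive variable, under $\exists$ over a $\Fin$-variable, and under $\exists$ over a negative variable.

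\emph{Boolean operations.} Conjunction distributes over the disjunctions. For negation, I first refine every $\chi_i$ into a disjunction of complete order cells over $C=\{c_1,\dots,c_m\}$. Since these cells partition $(\Q_{<0})^r$, I can write $\varphi\equiv\bigvee_{\text{cells }\sigma}\bigl(\sigma\wedge\bigvee_{i:\sigma\subseteq\chi_i}\Theta_i\bigr)$. Its negation is then $\bigvee_\sigma\bigl(\sigma\wedge\neg\bigvee_i\Theta_i\bigr)$, which again has the required form.

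\emph{Positive and $\Fin$ quantifiers.} A quantifier over a positive variable or over a $\Fin$-variable passes through $\chi_i$ and is absorbed into $\Theta_i$. This works because $\W(\Q_{>0})$ is closed under both quantifiers.

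\emph{The negative quantifier.} The main step is $\exists x_{r+1}<0$. After refining into complete cells and distributing $\exists$ over the disjunction, I must treat a single term $\exists x_{r+1}\bigl(\sigma(\bar x,x_{r+1};\bar c)\wedge\Theta(\rho(\bar x),\rho(x_{r+1}),\bar y;\rho(\bar c),\bar d)\bigr)$ with $\sigma$ a complete cell. There are two cases.

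\begin{itemize}
\item If $\sigma$ forces $x_{r+1}=x_j$ or $x_{r+1}=c_k$, I substitute that variable or parameter for $x_{r+1}$. This eliminates the quantifier directly.
\item Otherwise $\sigma$ places $x_{r+1}$ strictly inside an interval whose endpoints come from $\bar x$, $\bar c$, $-\infty$ and $0$. I claim the term is then equivalent to $(\exists x_{r+1}\,\sigma)\wedge\exists F\in\Fin\ \Theta(\rho(\bar x),F,\bar y;\rho(\bar c),\bar d)$. Here $\exists x_{r+1}\,\sigma$ is a $\{<\}$-formula in $\bar x,\bar c$, and the second conjunct is an $\calL_{\W(\Q_{>0})}$-formula in the required variables.
\end{itemize}

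The left-to-right direction of the claim is trivial: take $F=\rho(x_{r+1})$. For right-to-left, suppose $\bar x$ satisfies $\exists x_{r+1}\,\sigma$. Then, for that $\bar x$, the set of $x_{r+1}$ satisfying $\sigma$ is a nonempty open interval in $\Q_{<0}$. Given any witness $F$ for the $\Theta$-conjunct, density of $D_F=\rho^{-1}(F)$ from Definition~\ref{lem:dense-coding} supplies some $x_{r+1}$ in that interval with $\rho(x_{r+1})=F$. This density argument is exactly where the dense-coding hypothesis enters, and it is the one point I expect to need care: the cell must be complete, so that the solution set is a genuine open interval with no forced equalities.

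Codensity is not needed here; density alone suffices. By induction, every formula in the language has the stated form, which proves Proposition~\ref{prop:relative-reduction}.
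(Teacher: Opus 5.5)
Your proof is correct and takes essentially the same route as the paper. It runs an induction on formulas in the auxiliary multi-sorted structure, sorting atomic formulas into the $\chi$- and $\Theta$-parts and absorbing positive quantifiers into $\Theta$; it then eliminates a negative quantifier by refining into complete order cells, substituting in the forced-equality cases and using density of $\rho^{-1}(F)$ in the open-interval case. The differences from the paper are only cosmetic: you also allow quantification over $\Fin$, you handle negation by refining into a partition of complete cells rather than by De~Morgan plus distribution, and you merge the paper's two forced-equality cases into one.
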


Note that for an arbitrary formula with variables ranging over \(\mathbb{Q}\), the same conclusion holds after partitioning the variables according to the signs
$z_i<0$, $ z_i=0$,  and $ z_i>0$ for all variables.

\emph{Remark.}
The notation
\[
\Theta(\rho(\bar x),\bar y)
\]
is external notation. Internally, in $(\Q,<,0,A)$, it is read through the quotient of \(\mathbb{Q}_{<0}\) by the definable equivalence relation
\[
x\sim x'
\quad\Longleftrightarrow\quad
\forall y>0\bigl(A(x,y)\leftrightarrow A(x',y)\bigr).
\]
Thus we have:
\[
x\sim x'\quad\Longleftrightarrow\quad \rho(x)=\rho(x').
\]
Hence \(\mathbb{Q}_{<0}/{\sim}\) is naturally identified with \([\mathbb{Q}_{>0}]^{<\omega}\). Under this identification, the membership relation
$y\in\rho(x)$
is exactly
$A(x,y)$.
Thus formulas of \(\W(\mathbb{Q}_{>0})\) can be read as formulas in $(\Q,<,0,A)$, using negative representatives for finite subsets of \(\mathbb{Q}_{>0}\).

\begin{proof}
We prove the statement by induction on formulas.

It will be convenient for us to work in the following auxiliary three-sorted structure:
\[
\mathcal C=(\mathbb{Q}_{<0},<;\ \mathbb{Q}_{>0},<;\ \Fin,\in;\ \rho:\mathbb{Q}_{<0}\to\Fin).
\]
In this structure, the relation \(A\) is translated as follows:
\[
A(x,y)\quad\Longleftrightarrow\quad x\in \mathbb{Q}_{<0},\ y\in \mathbb{Q}_{>0},\ y\in\rho(x).
\]

We say that a formula in variables
\[
\bar x\in \bigl(\mathbb{Q}_{<0}\bigr)^r,\qquad \bar y\in \bigl(\mathbb{Q}_{>0}\bigr)^s
\]
is in \textbf{relative normal form} if it is a finite disjunction of formulas of the following form:
\[
\chi(\bar x)\wedge \Theta(\rho(\bar x),\bar y)
\]
where \(\chi\) is a formula of the pure order on \(\mathbb{Q}_{<0}\), and \(\Theta\) is a formula of \(\W(\mathbb{Q}_{>0})\). 
Parameters are allowed as in the statement of the proposition.

We show that every formula is equivalent, after fixing the signs of the variables, to a formula in relative normal form.

\noindent\textbf{Atomic formulas.}
The atomic formulas are of the form
$u=v$,$u<v$, and $A(u,v)$.
If \(u,v\) are both negative, then \(u=v\) and \(u<v\) are pure order formulas on \(\mathbb{Q}_{<0}\), hence belong to the \(\chi\)-part.

If \(u,v\) are both positive, then \(u=v\) and \(u<v\) are formulas in the singleton sort of \(\W(\mathbb{Q}_{>0})\), hence belong to the \(\Theta\)-part.

If one variable is negative and the other positive, then the truth of \(u=v\) and \(u<v\) is determined by the signs, and are partitioned across $\chi$ and $\theta$ as such.

Finally, \(A(u,v)\) is false unless \(u<0<v\). On \(\mathbb{Q}_{<0}\times \mathbb{Q}_{>0}\), we have
\[
A(u,v)\quad\Longleftrightarrow\quad v\in\rho(u),
\]
which is a formula of \(\W(\mathbb{Q}_{>0})\), with variable \(\rho(u)\) ranging over $\Fin$ and variable \(v\) ranging over the singleton sort. Thus every atomic formula is in relative normal form.

\medskip

\noindent\textbf{Boolean combinations.}
The class of formulas in relative normal form is closed under finite disjunctions.

It is also closed under conjunctions, because
\[
\bigl(\chi_1(\bar x)\wedge\Theta_1(\rho(\bar x),\bar y)\bigr)
\wedge
\bigl(\chi_2(\bar x)\wedge\Theta_2(\rho(\bar x),\bar y)\bigr)
\]
is equivalent to
\[
\bigl(\chi_1(\bar x)\wedge\chi_2(\bar x)\bigr)
\wedge
\bigl(\Theta_1(\rho(\bar x),\bar y)\wedge\Theta_2(\rho(\bar x),\bar y)\bigr).
\]

It is closed under negation as well. Indeed, after distributing over finite disjunctions, negation reduces to formulas of the form
\[
\neg\bigl(\chi(\bar x)\wedge\Theta(\rho(\bar x),\bar y)\bigr),
\]
which is equivalent to
\[
\neg\chi(\bar x)\vee\neg\Theta(\rho(\bar x),\bar y).
\]
This is again a finite disjunction of formulas in relative normal form.

\medskip

\noindent\textbf{Quantification over a positive variable.}
Suppose first that we have
\[
\exists z>0\ \psi(z,\bar x,\bar y).
\]
By the induction hypothesis, \(\psi(z,\bar x,\bar y)\) is equivalent to a finite disjunction of formulas
\[
\chi_i(\bar x)\wedge\Theta_i(\rho(\bar x),z,\bar y).
\]
Therefore
\[
\exists z>0\ \psi(z,\bar x,\bar y)
\]
is equivalent to
\[
\bigvee_i
\left(
\chi_i(\bar x)
\wedge
\exists z\in \mathbb{Q}_{>0}\ \Theta_i(\rho(\bar x),z,\bar y)
\right).
\]
The formula
\[
\exists z\in \mathbb{Q}_{>0}\ \Theta_i(\rho(\bar x),z,\bar y)
\]
is still a formula of \(\W(\mathbb{Q}_{>0})\). Hence the resulting formula is in relative normal form.

\medskip

\noindent\textbf{Quantification over a negative variable.}
Now we consider formulas of the following form:
\[
\exists t<0\ \psi(t,\bar x,\bar y).
\]
By the induction hypothesis, \(\psi(t,\bar x,\bar y)\) is equivalent to a finite disjunction of formulas
\[
\chi_i(t,\bar x)
\wedge
\Theta_i(\rho(t),\rho(\bar x),\bar y),
\]
where \(\chi_i(t,\bar x)\) is a pure order formula on \(\mathbb{Q}_{<0}\).

By quantifier elimination for dense linear orders without endpoints, every pure order formula over finitely many parameters is equivalent to a finite disjunction of complete order cells over those parameters. Thus it is enough to treat a formula of the form
\[
\exists t<0\
\left(
\delta(t,\bar x)
\wedge
\Theta(\rho(t),\rho(\bar x),\bar y)
\right),
\]
where \(\delta(t,\bar x)\) is a complete order cell.

There are three cases.

\medskip

\noindent\emph{Case 1: The cell forces \(t=x_j\).}
Assume that \(\delta(t,\bar x)\) implies \(t=x_j\). Then \(\rho(t)=\rho(x_j)\). Hence the formula is equivalent to
\[
\delta'(\bar x)
\wedge
\Theta(\rho(x_j),\rho(\bar x),\bar y),
\]
where \(\delta'(\bar x)\) is the pure order condition obtained by substituting \(x_j\) for \(t\) in \(\delta\). This is in relative normal form.

\medskip

\noindent\emph{Case 2: The cell forces \(t=c\), where \(c\in \mathbb{Q}_{<0}\) is a parameter.}
Assume that \(\delta(t,\bar x)\) implies \(t=c\). Then \(\rho(t)=\rho(c)\). Hence the formula is equivalent to
\[
\delta'(\bar x)
\wedge
\Theta(\rho(c),\rho(\bar x),\bar y),
\]
where \(\rho(c)\) is used as a finite-set parameter in \(\W(\mathbb{Q}_{>0})\). This is again in relative normal form.

\medskip

\noindent\emph{Case 3: The cell lets \(t\) vary in an open interval.}
Assume that \(\delta\) does not force \(t\) to be equal to one of the variables \(x_j\), and does not force \(t\) to be equal to a negative parameter.

For every \(\bar a\in \bigl(\mathbb{Q}_{<0}\bigr)^r\) satisfying the projection of \(\delta\), the set
\[
I_\delta(\bar a):=\{t\in \mathbb{Q}_{<0}:\delta(t,\bar a)\}
\]
is a non-empty open interval of \(\mathbb{Q}_{<0}\), possibly unbounded in \(\mathbb{Q}_{<0}\).

Let
$\pi_\delta(\bar x)$
be the pure order formula
$\exists t<0\ \delta(t,\bar x)$ (which simply asserts that \(I_\delta(\bar x)\) is non-empty).
Since every fiber \(\rho^{-1}(F)\) is dense in \(\mathbb{Q}_{<0}\), for every \(F\in\Fin\) and every \(\bar a\) satisfying \(\pi_\delta\), there exists
$t\in I_\delta(\bar a)$
such that \(\rho(t)=F\). Therefore
\[
\exists t<0\
\left(
\delta(t,\bar x)
\wedge
\Theta(\rho(t),\rho(\bar x),\bar y)
\right)
\]
is equivalent to
\[
\pi_\delta(\bar x)
\wedge
\exists F\in\Fin\ \Theta(F,\rho(\bar x),\bar y),
\]
where \(F\) ranges over the sort $\Fin$. 
The second conjunct is a formula of \(\W(\mathbb{Q}_{>0})\). 
Hence this is again in relative normal form.

This proves that quantification over negative variables preserves relative normal form.

Finally, an unrestricted quantifier is treated by decomposing according to sign:
\[
\exists z\ \psi(z,\bar u)
\]
is equivalent to
\[
\bigl(\exists z<0\ \psi(z,\bar u)\bigr)
\vee
\psi(0,\bar u)
\vee
\bigl(\exists z>0\ \psi(z,\bar u)\bigr).
\]
The cases \(z<0\) and \(z>0\) have already been treated, and the case \(z=0\) is substitution. Thus every formula is equivalent, after sign decomposition, to a formula in relative normal form.
\end{proof}

\section{Open Core}

\subsection{O-minimality of the open core of $(\Q, <,0, A)$ }

\begin{defn}[Open boxes]
Let \(r,s\geq 0\).

An \emph{open box} in \(\bigl(\mathbb{Q}_{<0}\bigr)^r\) is a set of the form
\[
B_{<0}=\prod_{i=1}^r(\ell_i,u_i),
\]
where \(\ell_i,u_i\in \mathbb{Q}_{<0}\) and \(\ell_i<u_i\).

An \emph{open box} in \(\bigl(\mathbb{Q}_{>0}\bigr)^s\) is a set of the form
\[
B_{>0}=\prod_{j=1}^s(p_j,q_j),
\]
where \(p_j,q_j\in \mathbb{Q}_{>0}\) and \(p_j<q_j\).

An open box in \(\bigl(\mathbb{Q}_{<0}\bigr)^r\times \bigl(\mathbb{Q}_{>0}\bigr)^s\) is a product
\[
B_{<0}\times B_{>0}.
\]
If \(r=0\) or \(s=0\), the corresponding product is understood to be a singleton.
\end{defn}

Let \(\delta(\bar x)\) be a complete order cell in \(\bigl(\mathbb{Q}_{<0}\bigr)^r\) over a finite set \(C\subseteq \mathbb{Q}_{<0}\) of parameters.
Call the tuple $(F_1,\dots,F_r)\in\Fin^r$ \textbf{compatible} with (the equalities imposed by) \(\delta\) if:
\begin{itemize}
    \item if \(\delta\) implies \(x_i=x_j\), then \(F_i=F_j\);
    \item if \(\delta\) implies \(x_i=c\) for some \(c\in C\), then \(F_i=\rho(c)\).
\end{itemize}

\begin{lem}
\label{lem:compatible-labels}
Let \(\delta(\bar x)\) be a complete order cell in \(\bigl(\mathbb{Q}_{<0}\bigr)^r\) over a finite set \(C\subseteq \mathbb{Q}_{<0}\) of parameters, and suppose that $(F_1,\dots,F_r)\in\Fin^r$ is compatible with \(\delta\).
Let \(B_{<0}\subseteq \bigl(\mathbb{Q}_{<0}\bigr)^r\) be an open box such that
\[
B_{<0}\cap\delta(\bigl(\mathbb{Q}_{<0}\bigr)^r)\neq\emptyset.
\]
Then there exists
\[
\bar a=(a_1,\dots,a_r)\in B_{<0}\cap\delta(\bigl(\mathbb{Q}_{<0}\bigr)^r)
\]
such that
$\rho(a_i)=F_i$
for every \(i \leq r\).
\end{lem}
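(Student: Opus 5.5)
Fix a point $\bar b\in B_{<0}\cap\delta\bigl((\mathbb{Q}_{<0})^r\bigr)$. The plan is to move $\bar b$ coordinate-block by coordinate-block to a point $\bar a$ that realizes the same complete order cell $\delta$ over $C$, stays inside $B_{<0}$, and has the prescribed labels. The cell $\delta$ determines an equivalence relation on the indices $\{1,\dots,r\}$ via $x_i=x_j$. Each equivalence class is either \emph{pinned}, meaning $\delta$ implies $x_i=c$ for some $c\in C$, or \emph{free}. Compatibility says that $F_i$ is constant on each class and equals $\rho(c)$ on a pinned class. For a pinned class, set $a_i:=c=b_i$; then $\rho(a_i)=F_i$ automatically, and $b_i\in(\ell_i,u_i)$ already.

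For the free classes, list them as $E_1,\dots,E_k$ in the order $b_{E_1}<\dots<b_{E_k}$ given by $\delta$, and write $F_{E}$ for the common label of the class $E$. Each free value $b_E$ lies in an open interval $J_E$ defined as follows.
\begin{enumerate}[(i)]
\item Start from $\bigcap_{i\in E}(\ell_i,u_i)$, which contains $b_E$.
\item Intersect with the open interval between the nearest points of $C\cup\{b_{E'}:E'\text{ pinned}\}$ below and above $b_E$; since $b_E$ is not in this finite set, $b_E$ lies strictly between them.
\item Shrink further by requiring the intervals for distinct free classes to be pairwise disjoint. For instance, cut at midpoints between consecutive free values $b_{E_t}<b_{E_{t+1}}$.
\end{enumerate}
All of this is finitely many open conditions, so $J_E$ is a nonempty open interval in $\mathbb{Q}_{<0}$ containing $b_E$.

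By Definition~\ref{lem:dense-coding}, $D_{F_E}=\rho^{-1}(F_E)$ is dense in $\mathbb{Q}_{<0}$. Hence we can choose $a_E\in J_E\cap D_{F_E}$, and set $a_i:=a_E$ for all $i\in E$.

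It remains to check that $\bar a$ works.
\begin{itemize}
\item \emph{Box membership:} $a_i\in(\ell_i,u_i)$ for every $i$, so $\bar a\in B_{<0}$.
\item \emph{Labels:} $\rho(a_i)=F_i$ for every $i$, by construction on both pinned and free classes.
\item \emph{Same cell:} $\bar a$ satisfies $\delta$. Equalities within classes hold by construction. Distinct free classes keep their relative order, because the $J_E$ are disjoint and ordered as the $b_E$. Each free coordinate keeps its position relative to every $c\in C$ and every pinned coordinate, since $J_E$ lies in the same gap. Pinned coordinates are unchanged, and $a_E\neq c$ for all $c\in C$ because $J_E$ avoids $C$.
\end{itemize}

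The only real point is this last bookkeeping step: the open neighbourhoods must be chosen so that perturbation preserves every order relation recorded in $\delta$, including non-equalities with parameters. Once the $J_E$ are set up as above, this is routine.
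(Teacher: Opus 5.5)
Your proposal is correct and follows essentially the same route as the paper's proof. Both fix $\bar b$ in $B_{<0}\cap\delta\bigl((\mathbb{Q}_{<0})^r\bigr)$, leave the classes pinned to parameters unchanged, and for each free class use density of $\rho^{-1}(F)$ inside a small interval that respects the box, avoids $C$, and preserves the order prescribed by $\delta$; you simply construct those intervals more explicitly.
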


\begin{proof}
Choose any $\bar b=(b_1,\dots,b_r)$ from $B_{<0}\cap\delta(\bigl(\mathbb{Q}_{<0}\bigr)^r)$.
The cell \(\delta\) determines an equivalence relation on the variables \(x_1,\dots,x_r\), where \(x_i\) and \(x_j\) are equivalent if \(\delta\) implies \(x_i=x_j\).

Some equivalence classes may be forced to be equal to a parameter \(c\in C\). For such classes, the value of the corresponding coordinates is fixed, and compatibility gives the required label.

For each remaining equivalence class, choose a small open interval $J\subseteq \mathbb{Q}_{<0}$
around the corresponding coordinate of \(\bar b\). We choose these intervals small enough so that:
\begin{itemize}
    \item different free equivalence classes get disjoint intervals;
    \item their relative order is the order prescribed by \(\delta\);
    \item the intervals avoid all parameters from \(C\);
    \item for a class \(E\) of variables, the interval \(J\) is contained in all coordinate intervals of \(B_{<0}\) corresponding to variables in \(E\).
\end{itemize}
This is possible because only finitely many order constraints are involved.

Let \(F\in\Fin\) be the label prescribed for this equivalence class. Since \(\rho^{-1}(F)\) is dense in \(\mathbb{Q}_{<0}\), the interval \(J\) contains some point \(a\) such that $\rho(a)=F$.
Doing this independently for all free equivalence classes gives a tuple
\[
\bar a\in B_{<0}\cap\delta(\bigl(\mathbb{Q}_{<0}\bigr)^r)
\]
with \(\rho(a_i)=F_i\) for every \(i\).
\end{proof}

The following lemma is a special case of a result of Tressl. 
More precisely, Tressl proved that if \(T\) is \(\omega\)-categorical then every model of \(T\) is stably embedded in its weak monadic second-order expansion. 
%citer TRESSL).

\begin{lem}[Stably Embedded]
\label{lem:positive-line}
Let $X\subseteq \bigl(\mathbb{Q}_{>0}\bigr)^n$
be definable in $\W(\mathbb{Q}_{>0})$
with parameters from \(\mathbb{Q}_{>0}\) and from \(\Fin\). Then \(X\) is definable in the pure order \((\mathbb{Q}_{>0},<)\), with parameters from \(\mathbb{Q}_{>0}\).

More precisely, if \(X\) is definable with singleton parameters
$a_1,\dots,a_m\in \mathbb{Q}_{>0}$
and parameters in $\Fin$ given by $F_1,\dots,F_k$,
then \(X\) is definable in \((\mathbb{Q}_{>0},<)\) with parameters from
\[
E:=\{a_1,\dots,a_m\}\cup F_1\cup\cdots\cup F_k.
\]
\end{lem}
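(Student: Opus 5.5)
The plan is an automorphism argument. $(\mathbb{Q}_{>0},<)$ is homogeneous, and $\W(\mathbb{Q}_{>0})$ is obtained from it canonically. Every order-automorphism $\sigma$ of $\mathbb{Q}_{>0}$ therefore extends to an automorphism of $\W(\mathbb{Q}_{>0})$: it acts on the singleton sort as $\sigma$, and on $\Fin$ by $F\mapsto\sigma[F]$. This extension preserves $<$ and $\in$.

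Now let $X$ be defined by $\varphi(\bar y;a_1,\dots,a_m,F_1,\dots,F_k)$, and let $E$ be as in the statement. Every $\sigma$ fixing $E$ pointwise fixes each $a_i$ and satisfies $\sigma[F_j]=F_j$. So the extended automorphism fixes all the parameters of $\varphi$, and hence $\sigma[X]=X$.

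It then remains to show that any $X\subseteq(\mathbb{Q}_{>0})^n$ invariant under $\mathrm{Aut}(\mathbb{Q}_{>0},<)_E$, with $E$ finite, is definable in the pure order with parameters from $E$. I would do this directly rather than quote $\omega$-categoricity abstractly.
\begin{itemize}
\item Enumerate $E=\{e_1<\dots<e_p\}$ and consider the complete order cells over $E$ in the variables $\bar y$. These are the positive-side analogues of the cells defined for $\mathbb{Q}_{<0}$ above; there are finitely many and they partition $(\mathbb{Q}_{>0})^n$.
\item Show that two tuples $\bar b,\bar b'$ lie in the same cell exactly when some $\sigma\in\mathrm{Aut}(\mathbb{Q}_{>0},<)$ fixing $E$ pointwise sends $\bar b$ to $\bar b'$.
\item The reverse direction is trivial. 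For the forward direction, the map $e_i\mapsto e_i$, $b_j\mapsto b'_j$ is a well-defined order-preserving finite partial map, because both tuples realize the same cell. By back-and-forth, or by explicit piecewise-linear interpolation on the finitely many intervals determined by $E\cup\bar b$, it extends to an automorphism of $\mathbb{Q}_{>0}$.
\item Hence each cell is an orbit, $X$ is a union of orbits, and so $X$ is the disjunction of the cells it contains: a quantifier-free $\{<\}$-formula over $E$.
\end{itemize}

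I expect the delicate step to be confirming that the extension of $\sigma$ really is an automorphism of the whole structure $\W(\mathbb{Q}_{>0})$, in whatever language is used. In the $\{\Fin,<,\in\}$ presentation this is immediate. If one instead uses Linkhorn's $\calL_{W(\I)}$, one must check that $\cup$, $\cap$, $\min$, $\max$ and $s^{-1}$ commute with $F\mapsto\sigma[F]$. This holds because each of these operations is defined purely from the order and membership, and $\sigma$ is an order-isomorphism.

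The interpolation step is routine. Since $\mathbb{Q}_{>0}$ has no endpoints, the unbounded end intervals cause no trouble, and the bounded intervals between consecutive points of $E\cup\bar b$ are matched by order-preserving bijections between rational intervals.
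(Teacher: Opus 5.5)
Your proof is correct, but it takes a different route from the paper. The paper does not argue directly. It cites Tressl's result that a model of an $\omega$-categorical theory is stably embedded in its weak monadic second-order expansion, and reads off the conclusion in one line. You prove the needed instance by hand:
\begin{enumerate}
\item order-automorphisms $\sigma$ of $\mathbb{Q}_{>0}$ lift to automorphisms of $\W(\mathbb{Q}_{>0})$ via $F\mapsto\sigma[F]$;
\item the pointwise stabilizer of $E$ fixes every parameter, with $\sigma[F_j]=F_j$;
\item by ultrahomogeneity of $(\mathbb{Q}_{>0},<)$, the orbits of this stabilizer on $n$-tuples are exactly the finitely many complete order cells over $E$.
\end{enumerate}
This is essentially the general argument specialized to DLO, with Ryll-Nardzewski replaced by an explicit orbit computation.

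The citation buys brevity and generality, since it works over any $\omega$-categorical base. Your version is self-contained and gives a sharper conclusion: $X$ is a finite union of cells, hence quantifier-free $\{<\}$-definable over exactly $E$. Stable embeddedness in its bare form only gives definability with some parameters from $\mathbb{Q}_{>0}$, in the language induced from $\W(\mathbb{Q}_{>0})$. So the reduction to the pure order and the ``more precisely'' clause about $E$ are both established explicitly by your orbit argument, whereas the paper's one-line deduction leaves them implicit.
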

\begin{proof}
    This is an immediate consequence of (\cite[Lemma~2.5(ii)]{MT}, since DLO is $\omega$-categorical.
    Since $(\mathbb{Q}_{>0},<)$ is therefore stably embedded in $\W(\mathbb{Q}_{>0})$, any definable subset of $(\mathbb{Q}_{>0})^n$ is also defined by a $<$-formula using the same parameters, by definition.
\end{proof}

\begin{prop}[Interiors are pure-order definable]
\label{prop:interior-pure-order}
Let $X\subseteq \bigl(\mathbb{Q}_{<0}\bigr)^r\times \bigl(\mathbb{Q}_{>0}\bigr)^s$
be definable in $(\Q,<,0,A)$, with parameters. Then the interior of \(X\) (with respect to the product topology on \(\bigl(\mathbb{Q}_{<0}\bigr)^r\times \bigl(\mathbb{Q}_{>0}\bigr)^s\)) is definable in the pure order
$(\mathbb{Q}_{<0},<)\sqcup(\mathbb{Q}_{>0},<)$.

In particular, if \(X\) is open in \(\bigl(\mathbb{Q}_{<0}\bigr)^r\times \bigl(\mathbb{Q}_{>0}\bigr)^s\), then \(X\) is definable in the pure order.
\end{prop}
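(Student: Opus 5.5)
The plan is to describe the interior of $X$ by a condition that mentions the labels $\rho(\bar x)$ only through sets of positive reals that are definable in $\W(\mathbb{Q}_{>0})$. Lemma~\ref{lem:positive-line} then turns those sets into pure-order definable ones.

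\textbf{Step 1: normal form.} Apply Proposition~\ref{prop:relative-reduction} to write $X$ as
\[
\bigvee_{i}\chi_i(\bar x;\bar c)\wedge\Theta_i(\rho(\bar x),\bar y;\rho(\bar c),\bar d).
\]
Let $C=\{c_1,\dots,c_m\}$. By quantifier elimination for DLO, each $\chi_i$ is a finite union of complete order cells over $C$, so we may assume each $\chi_i$ is a single such cell. For each complete order cell $\delta$ over $C$ in $r$ variables, let $\mathrm{Comp}(\delta)\subseteq\Fin^r$ be the set of tuples compatible with $\delta$. This is definable in $\W(\mathbb{Q}_{>0})$ with the parameters $\rho(c)$, since it consists of equalities $F_i=F_j$ and $F_i=\rho(c)$. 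Define
\[
Y_\delta:=\Bigl\{\bar y\in(\mathbb{Q}_{>0})^s:\ \forall \bar F\in \mathrm{Comp}(\delta)\ \bigvee_{i:\chi_i=\delta}\Theta_i(\bar F,\bar y;\rho(\bar c),\bar d)\Bigr\}.
\]
This set is definable in $\W(\mathbb{Q}_{>0})$ with singleton parameters $\bar d$ and finite-set parameters $\rho(c_k)$. By Lemma~\ref{lem:positive-line}, $Y_\delta$ is therefore definable in $(\mathbb{Q}_{>0},<)$ with parameters from $E=\{\bar d\}\cup\bigcup_k\rho(c_k)$.

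\textbf{Step 2: characterize the interior.} The claim is that $(\bar a,\bar b)\in\operatorname{Int}(X)$ if and only if there is an open box $B_{<0}\times B_{>0}\ni(\bar a,\bar b)$ such that $B_{>0}\subseteq Y_\delta$ for every cell $\delta$ over $C$ with $B_{<0}\cap\delta\neq\emptyset$.

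For ($\Rightarrow$), suppose $B_{<0}\times B_{>0}\subseteq X$. Fix a cell $\delta$ meeting $B_{<0}$, a tuple $\bar F\in\mathrm{Comp}(\delta)$, and $\bar y\in B_{>0}$. Lemma~\ref{lem:compatible-labels} gives $\bar a'\in B_{<0}\cap\delta$ with $\rho(\bar a')=\bar F$. Since $(\bar a',\bar y)\in X$ and $\bar a'$ satisfies only the cell $\delta$, some $i$ with $\chi_i=\delta$ has $\Theta_i(\bar F,\bar y)$. Hence $\bar y\in Y_\delta$.

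For ($\Leftarrow$), let $(\bar a',\bar y)\in B_{<0}\times B_{>0}$. Then $\bar a'$ lies in a unique cell $\delta$, and this $\delta$ meets $B_{<0}$. The tuple $\rho(\bar a')$ is automatically compatible with $\delta$. So $\bar y\in Y_\delta$ yields some $i$ with $\chi_i=\delta$ and $\Theta_i(\rho(\bar a'),\bar y)$, which gives $(\bar a',\bar y)\in X$.

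\textbf{Step 3: conclude.} There are finitely many cells $\delta$. Each condition ``$B_{<0}\cap\delta\neq\emptyset$'' is a pure order formula in the box endpoints over $C$. Each condition ``$B_{>0}\subseteq Y_\delta$'' is a pure order formula over $E$ by Step 1. Quantifying over box endpoints therefore gives a formula of $(\mathbb{Q}_{<0},<)\sqcup(\mathbb{Q}_{>0},<)$ defining $\operatorname{Int}(X)$. The ``in particular'' clause follows because an open $X$ equals its interior.

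The main obstacle is Step 2, direction ($\Leftarrow$). If one naively kept only the open (full-dimensional) cells, density would give ($\Rightarrow$) but would say nothing about points of the box lying in lower-dimensional cells. Quantifying over all cells together with their compatible labels is what makes Lemma~\ref{lem:compatible-labels} give an exact characterization in both directions.
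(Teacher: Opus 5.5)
Your proposal is correct and follows essentially the same argument as the paper. It uses the normal form refined into complete order cells, Lemma~\ref{lem:compatible-labels} for both directions of the box-containment criterion (quantifying over every cell meeting the box, not only the open ones), and stable embeddedness to make the positive-side condition pure-order. The only cosmetic difference is where you apply Lemma~\ref{lem:positive-line}: you apply it to the fixed set $Y_\delta\subseteq(\mathbb{Q}_{>0})^s$ and then express $B_{>0}\subseteq Y_\delta$, whereas the paper applies it to the endpoint relation $\Gamma_\delta(\bar p,\bar q)$, which encodes the same condition with the quantifiers over $\bar z$ and $\bar F$ in the other order.
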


\begin{proof}
Let \(\varphi(\bar x,\bar y)\) define \(X\), where $\bar x\in \bigl(\mathbb{Q}_{<0}\bigr)^r$ and 
$\bar y\in \bigl(\mathbb{Q}_{>0}\bigr)^s$.
By Proposition~\ref{prop:relative-reduction}, after refining the order conditions into complete order cells, we may write
\[
\varphi(\bar x,\bar y)
\Longleftrightarrow
\bigvee_\delta
\left(
\delta(\bar x)
\wedge
\Theta_\delta(\rho(\bar x),\bar y)
\right),
\]
where the \(\delta\)'s form a finite partition of \(\bigl(\mathbb{Q}_{<0}\bigr)^r\) into complete order cells, and each \(\Theta_\delta\) is a formula of \(\W(\mathbb{Q}_{>0})\). We suppress parameters in the notation; parameters of the form \(\rho(c)\) in the sort $\Fin$ and singleton parameters from \(\mathbb{Q}_{>0}\) are allowed in each of the formulas \(\Theta_\delta\).

Let
\[
B_{<0}=\prod_{i=1}^r(\ell_i,u_i)\subseteq \bigl(\mathbb{Q}_{<0}\bigr)^r
\]
and
\[
B_{>0}=\prod_{j=1}^s(p_j,q_j)\subseteq \bigl(\mathbb{Q}_{>0}\bigr)^s
\]
be open boxes.

For a cell \(\delta\), let $\operatorname{Meet}_\delta(\bar\ell,\bar u)$
denote the $<$-formula stating that $B_{<0}\cap\delta(\bigl(\mathbb{Q}_{<0}\bigr)^r)\neq\emptyset$.
This condition is definable in the pure order on \(\mathbb{Q}_{<0}\).

For finite sets \(F_1,\dots,F_r\in\Fin\), let $\operatorname{Comp}_\delta(F_1,\dots,F_r)$
be the weak monadic formula saying that the tuple of labels is compatible with the equalities imposed by \(\delta\). 
Explicitly, it says that $F_i=F_j$ whenever \(\delta\) implies \(x_i=x_j\), and that $F_i=\rho(c)$
whenever \(\delta\) implies \(x_i=c\) for some parameter $c<0$.

We now define in \(\W(\mathbb{Q}_{>0})\) the formula $\Gamma_\delta(\bar p,\bar q)$
as follows:
\[
\begin{aligned}
\forall F_1,\dots,F_r\in\Fin\ \forall z_1,\dots,z_s\in \mathbb{Q}_{>0}\ \bigl[&
\operatorname{Comp}_\delta(F_1,\dots,F_r)
\wedge
\bigwedge_{j=1}^s p_j<z_j<q_j\\
&\Rightarrow
\Theta_\delta(F_1,\dots,F_r,z_1,\dots,z_s)
\bigr].
\end{aligned}
\]

Observe that this formula describes the boxes $B_{\bar{p},\bar{q}}$ with endpoints coming from $\bar{p}$ and $\bar{q}$ (on the left and right respectively) on which the tuple $(F_1,\dots,F_r)$ being compatible guarantees that $\Theta_{\delta}$ is satisfied by said tuple in conjunction with any element $\bar{z}$ of the box $B_{\bar{p},\bar{q}}$.
This is a formula of \(\W(\mathbb{Q}_{>0})\) whose free variables are elements of \(\mathbb{Q}_{>0}\). By Lemma~\ref{lem:positive-line}, it is equivalent to a pure order formula on \(\mathbb{Q}_{>0}\); let $\gamma_\delta(\bar p,\bar q)$ be such a $<$-formula.

We claim that
\[
B_{<0}\times B_{>0}\subseteq X
\]
if and only if, for every cell \(\delta\),
\[
\operatorname{Meet}_\delta(\bar\ell,\bar u)
\Rightarrow
\gamma_\delta(\bar p,\bar q).
\]

Assume first that \(B_{<0}\times B_{>0}\subseteq X\). Let \(\delta\) be a cell such that
\[
B_{<0}\cap\delta(\bigl(\mathbb{Q}_{<0}\bigr)^r)\neq\emptyset.
\]
Let $(F_1,\dots,F_r)\in\Fin^r$ be compatible with \(\delta\). 
By Lemma~\ref{lem:compatible-labels}, there exists
\[
\bar a\in B_{<0}\cap\delta(\bigl(\mathbb{Q}_{<0}\bigr)^r)
\]
such that \(\rho(a_i)=F_i\) for every \(i\). 
Since \(B_{<0}\times B_{>0}\subseteq X\), for every \(\bar z\in B_{>0}\) we have $(\bar a,\bar z)\in X$.
Using the normal form on the cell \(\delta\), this yields $\Theta_\delta(F_1,\dots,F_r,\bar z)$.
Thus \(\Gamma_\delta(\bar p,\bar q)\) holds, and therefore \(\gamma_\delta(\bar p,\bar q)\) holds.

Conversely, assume that for every cell \(\delta\),
\[
\operatorname{Meet}_\delta(\bar\ell,\bar u)
\Rightarrow
\gamma_\delta(\bar p,\bar q).
\]
We can translate this as saying that if $\delta$ intersects $B_{<0}$, then for every tuple $\bar{F}$ of compatible parameters coming from $\Fin$ and for every element $\bar{z}$ of $B_{>0}$, the pair $(\bar{F}, \bar{z})$ satisfy $\Theta_{\delta}$.
Suppose that $\bar a\in B_{<0}$ and $\bar z\in B_{>0}$.
There is a unique cell \(\delta\) such that \(\delta(\bar a)\) holds. 
This $\bar{a}$ then witnesses that 
\(B_{<0}\cap\delta(\bigl(\mathbb{Q}_{<0}\bigr)^r)\neq\emptyset\), i.\ e., $\bar{a}$ witnesses that $\operatorname{Meet}_\delta(\bar\ell,\bar u)$ holds.
By hypothesis, it follows that \(\gamma_\delta(\bar p,\bar q)\) holds,
so \(\Gamma_\delta(\bar p,\bar q)\) holds as well.

Due to how we partition $\varphi$ using $\delta$, the tuple $(\rho(a_1),\dots,\rho(a_r))$
is necessarily compatible with \(\delta\). Since \(\bar z\in B_{>0}\), by $\Gamma_\delta(\bar p,\bar q)$ we get
\[
\Theta_\delta(\rho(a_1),\dots,\rho(a_r),\bar z).
\]
Together with \(\delta(\bar a)\), the normal form yields $(\bar a,\bar z)\in X$.
Thus \(B_{<0}\times B_{>0}\subseteq X\).

Therefore the relation
\[
B_{<0}\times B_{>0}\subseteq X
\]
is definable in the pure order.

Hence an element \((\bar a,\bar b)\in \bigl(\mathbb{Q}_{<0}\bigr)^r\times \bigl(\mathbb{Q}_{>0}\bigr)^s\) belongs to \(\Int(X)\) if and only if there exist endpoints
$\ell_i<a_i<u_i$ for $i=1,\dots,r$
in \(\mathbb{Q}_{<0}\) and
$p_j<b_j<q_j$ for $j=1,\dots,s$
in \(\mathbb{Q}_{>0}\) such that
\[
\prod_i(\ell_i,u_i)\times\prod_j(p_j,q_j)\subseteq X.
\]
We've shown that this is expressible in the pure order, thus \(\Int(X)\) is definable in the pure order.
If \(X\) is open, then \(X=\Int(X)\), so \(X\) is pure-order definable.
\end{proof}

\begin{cor}
\label{cor:open-sets-qn}
Every open subset of \(\mathbb{Q}^n\) definable in $(\Q,<,0,A)$ with parameters is definable in the pure order \((\mathbb{Q},<)\) (with the same parameters).
\end{cor}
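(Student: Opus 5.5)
The idea is to reduce Corollary~\ref{cor:open-sets-qn} to Proposition~\ref{prop:interior-pure-order} by splitting $\Q^n$ according to the signs of the coordinates. Let $U\subseteq\Q^n$ be open and definable in $(\Q,<,0,A)$ with parameters. For each sign pattern $\sigma\in\{-,0,+\}^n$, let $S_\sigma$ be the set of tuples whose $i$-th coordinate is negative, zero or positive according to $\sigma(i)$. The sets $S_\sigma$ are finitely many, they partition $\Q^n$, and each is defined by a $\{<\}$-formula using only the constant $0$. So it suffices to show that each $U\cap S_\sigma$ is pure-order definable.

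Fix $\sigma$. Let $r$ be the number of negative coordinates and $s$ the number of positive ones. The zero coordinates are fixed, so forgetting them gives a homeomorphism of $S_\sigma$ (with the subspace topology) onto $(\Q_{<0})^r\times(\Q_{>0})^s$, after permuting coordinates. The key observation is that $S_\sigma$ is not open in $\Q^n$ when $\sigma$ has zero entries, but $U\cap S_\sigma$ is still relatively open in $S_\sigma$, since $U$ is open. Its image $X_\sigma\subseteq(\Q_{<0})^r\times(\Q_{>0})^s$ is therefore open in the product topology. It is also definable in $(\Q,<,0,A)$, with the same parameters as $U$ together with $0$.

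Proposition~\ref{prop:interior-pure-order} then applies directly. Since $X_\sigma$ is open, $X_\sigma=\Int(X_\sigma)$ is definable in $(\Q_{<0},<)\sqcup(\Q_{>0},<)$. Such a formula is a $\{<\}$-formula on $\Q$ with quantifiers relativized to $x<0$ or $x>0$, so $X_\sigma$ is definable in $(\Q,<)$ with the extra constant $0$. Pulling back along the coordinate embedding, and adding the conjuncts $x_i=0$ for the zero coordinates, shows that $U\cap S_\sigma$ is $\{<\}$-definable. Taking the finite union over all $\sigma$ gives $U$.

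The main obstacle is the bookkeeping of parameters. We must check that the parameters of the pure-order formula are those of $U$ together with $0$, which follows from tracking the proof of Proposition~\ref{prop:interior-pure-order}. That proof uses Lemma~\ref{lem:positive-line}, and as stated there the resulting parameter set is $E$. This set includes the elements of the finite sets $\rho(c)$, not only the original parameters. Because of this, I would read ``with the same parameters'' as the parameters of $U$ together with $0$ and these finitely many associated rationals, or else simply as ``with parameters''. Either reading is enough for o-minimality of the open core, since every pure-order definable subset of $\Q$ with parameters is a finite union of points and intervals.
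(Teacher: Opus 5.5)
Your argument is correct and is essentially the paper's: you decompose $\Q^n$ into sign strata and apply Proposition~\ref{prop:interior-pure-order} to each stratum, and your observation that $U\cap S_\sigma$ is relatively open in $S_\sigma$ handles the zero-coordinate strata more directly than the paper's recursive slicing into the sets $P_i$. Your caution about parameters is also warranted: the argument yields the parameters of $U$ together with $0$ and the elements of $\rho(c)$ for negative parameters $c$, and the literal ``same parameters'' reading is in fact false (if $\rho(c)=\{1\}$, the open set $\{y:\forall y'\,(A(c,y')\to y'<y)\}=(1,\infty)$ is not $\{<\}$-definable over $\{c,0\}$), though plain ``with parameters'' suffices for o-minimality of the open core.
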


\begin{proof}
Let $U\subseteq\mathbb{Q}^n$ be open and definable in $(\Q,<,0,A)$.

We decompose \(\mathbb{Q}^n\) into finitely many sign strata. 
A sign stratum is given by choosing, for each coordinate \(z_i\), one of the following:
\[
z_i<0,\qquad z_i=0,\qquad z_i>0.
\]
Let \(S\) be such a sign stratum. 
After deleting the coordinates with fixed value \(0\), the sign stratum \(S\) is naturally identified with a product $\bigl(\mathbb{Q}_{<0}\bigr)^r\times \bigl(\mathbb{Q}_{>0}\bigr)^s$
where $r+s\leq n$.

By Proposition~\ref{prop:interior-pure-order}, the set \(U\cap S \subseteq \Q^n\) is pure-order definable. 
We now observe that $\bigcup_S(U\cap S)$ is a finite union, and we may write $U\setminus \bigcup_S(U\cap S)$ as follows:
$$\{P_i:=U \cap (x_1, \ldots ,x_{i-1},0,x_{i+1}, \ldots ,x_n): i \leq n\}.$$
Since $U$ is open, so is each $P_i$ in $\Q^{n-1}$.

Repeating this argument for each $P_i$, we see that we can write $U$ as a finite union of the images of sign strata under the map $\iota_{J}:S \hookrightarrow \Q^n$ where $J \subseteq \{1, \ldots ,n\}$ specifies which coordinates of $\iota_{J}(S)$ have constant value $0$, and for each $\bar{x} \in S$, $\iota$ maps $x_i$ to the $i^{th}$ coordinate within the set $\{1, \ldots ,n\} \setminus J$ (i.\ e., the most natural way to embed $S$ ``around'' the fixed value 0 coordinates). 
Clearly $\iota_{J}(S)$ is also pure-order definable for each sign stratum in every arity $i \leq n$, so $U$ is also pure-order definable, as desired.
\end{proof}

\begin{thm}
The open core of $(\Q,<,0,A)$ is o-minimal.
\end{thm}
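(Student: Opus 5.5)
The plan is to reduce the statement to Corollary~\ref{cor:open-sets-qn} together with o-minimality of $(\Q,<)$. Write $\calM=(\Q,<,0,A)$. By definition, the open core $\calM^{\circ}$ is the structure on $\Q$ whose language consists of all open subsets of all $\Q^n$ definable in $\calM$ with parameters. It extends DLO, since $<$ is an open subset of $\Q^2$ definable in $\calM$, so $(\Q,<)$ is a reduct of $\calM^{\circ}$. It therefore remains to show that every unary subset of $\Q$ definable with parameters in $\calM^{\circ}$ is a finite union of points and intervals.

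The first step is to show that $\calM^{\circ}$ and $(\Q,<)$ are interdefinable.
\begin{itemize}
\item Every basic relation of $\calM^{\circ}$ is an open set $U\subseteq\Q^n$ definable in $\calM$.
\item By Corollary~\ref{cor:open-sets-qn}, each such $U$ is definable in the pure order $(\Q,<)$ with parameters. The constant $0$ may appear as a parameter; this is harmless, since parameters are allowed.
\item Conversely, $<$ is among the basic relations of $\calM^{\circ}$.
\end{itemize}
Since every basic relation of each structure is definable with parameters in the other, an induction on formulas shows that the two structures define exactly the same sets with parameters.

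The second step is to use quantifier elimination for DLO. Every subset of $\Q$ definable with parameters in $(\Q,<)$ is a Boolean combination of sets of the form $\{x : x<c\}$, $\{x : x=c\}$ and $\{x : c<x\}$ for finitely many parameters $c$. Any such combination is a finite union of points and intervals. Combined with the first step, this gives o-minimality of $\calM^{\circ}$.

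The only point needing care is the first step. A formula of $\calM^{\circ}$ may use its basic open relations inside quantifiers and negations, so the sets it defines need not themselves be open. This is why the induction on formulas is needed, rather than applying the corollary to the defined sets directly. All the substantive work has already been done in Proposition~\ref{prop:interior-pure-order} and Corollary~\ref{cor:open-sets-qn}.
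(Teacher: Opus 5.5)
Your proposal is correct and follows the paper's route. The paper gives no separate proof of this theorem; it treats it as an immediate consequence of Corollary~\ref{cor:open-sets-qn} (every definable open set is definable in $(\Q,<)$ with parameters) together with the o-minimality of $(\Q,<)$, which is exactly your reduction, and you usefully make explicit that $<$ is itself a basic open relation and that an induction on formulas is needed (only the direction from $\calM^{\circ}$ into $(\Q,<)$ is actually required).
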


\subsection{Main Result}
\begin{rmk}
Suppose that $\calR \succcurlyeq (\Q,<,0,A)$ is an $\aleph_1$-saturated elementary extension with infinite elements (relative to the elements of $\Q$).
Then $A^{\calR}$ contains infinite discrete fibers.
\begin{proof}
This follows from the standard compactness argument.
\end{proof}
\end{rmk}

\begin{cor}
The open core of $\calR$ is not o-minimal.
\begin{proof}
The open core of $\calR$ includes the topological closure of each fiber $A_{x}^{\calR}$ with $x \in \calR$, since these are definable.
Hence $\calR \setminus \overline{A_{x}^{\calR}}$ is definable for each $x \in \calR$.
Yet for at least one $x \in \calR$, we know $\calR \setminus \overline{A_{x}^{\calR}}$ is an infinite union of open intervals that cannot be expressed as a finite union of open intervals.
\end{proof}
\end{cor}

\begin{cor}[Theorem \ref{Main}]
The property of having o-minimal open core is not elementary.
\end{cor}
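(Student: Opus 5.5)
The plan is to exhibit two elementarily equivalent structures, one with o-minimal open core and one without, using the results just established. Concretely, take $\calM=(\Q,<,0,A)$ from the construction. By the theorem on the open core of $(\Q,<,0,A)$, whose content is Corollary~\ref{cor:open-sets-qn}, $\calM$ has o-minimal open core. Every open set definable in $\calM$ is definable in the pure order $(\Q,<)$ with parameters. Since $(\Q,<)$ is o-minimal, so is $\calM^{\circ}$, and $\calM$ extends DLO.

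Next I will take an $\aleph_1$-saturated elementary extension $\calR\succcurlyeq\calM$. Its existence is the standard compactness and saturation result. The relevant feature is the following: by (P2), for each $N$ the sentence $\exists x\,\exists y_1\dots y_N\,(\bigwedge_{i<j}y_i\neq y_j\wedge\bigwedge_i A(x,y_i))$ holds in $\calM$, so the type $\{|A_x|\ge N : N\in\N\}$ in the variable $x$ is finitely satisfiable. Saturation then yields $x\in\calR$ with $A^{\calR}_x$ infinite. This is the content of the Remark preceding the corollaries. I would also note that $A^{\calR}_x$ is discrete: in $\calM$ every fiber is finite, hence every point of a fiber is isolated in it, and the first-order statement ``every point of $A_x$ has an open interval meeting $A_x$ only in that point'' transfers to $\calR$.

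By the corollary just proved, $\calR^{\circ}$ is not o-minimal. The argument there is that $\calR\setminus\overline{A^{\calR}_x}$ is a definable open set. Since $A^{\calR}_x$ is infinite and discrete, its complement in the closure cannot be a finite union of intervals, because a finite union of intervals has finite boundary, whereas here the boundary contains the infinitely many points of $A^{\calR}_x$. Hence $\calM\equiv\calR$ while exactly one of them has o-minimal open core, so the class of structures with o-minimal open core is not closed under elementary equivalence. That is, the property is not elementary.

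The only step needing care is the claim that infinitely many isolated points yield a boundary that is genuinely infinite in $\calR$. Each point $a$ of $A^{\calR}_x$ lies in the closure of $\calR\setminus\overline{A^{\calR}_x}$ by density of the order, together with discreteness giving a punctured interval around $a$ disjoint from $A^{\calR}_x$. So every such point is a boundary point of the open set, and there are infinitely many. Everything else is a direct assembly of the earlier results.
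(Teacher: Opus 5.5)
Your proposal is correct and follows essentially the same route as the paper. You use that $\calM$ has o-minimal open core because its definable open sets are pure-order definable (Corollary~\ref{cor:open-sets-qn}), and that an $\aleph_1$-saturated $\calR\succcurlyeq\calM$ has an infinite, discrete fiber $A^{\calR}_x$ whose closure has a complement that is a definable open set with infinitely many boundary points, so $\calR^{\circ}$ is not o-minimal. Along the way you make explicit the compactness type, the transfer of discreteness, and the finite-boundary argument that the paper leaves implicit (where you write ``its complement in the closure'' you mean the complement of its closure).
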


\section*{Acknowledgments}
The first named author was supported by Cofund Math In Greater Paris, Marie Sk\l odowska-Curie Actions (H2020-MSCA-COFUND-2020-GA101034255).
Many thanks to Philipp Hieronymi for his insightful comments and excellent suggestions, as well as the anonymous referee for their helpful comments and questions as well.

\end{document}